%

\documentclass[11pt]{amsart}
\expandafter\let\csname ver@amsthm.sty\endcsname\relax
\let\theoremstyle\relax


\usepackage[colorlinks=true,linkcolor=blue]{hyperref}
\usepackage{amsmath,amsthm,amssymb}
\usepackage{tikz}
\usepackage[capitalize,nameinlink]{cleveref}
\crefname{claim}{Claim}{Claims}

\title{Hardness Results for the Subpower Membership Problem}
\author[J. Shriner]{Jeff Shriner}
\address[Jeff Shriner]{Department of Mathematics, University of Colorado Boulder, CO 80309-0395, USA}
\email{jeffrey.shriner@colorado.edu}

\subjclass[2010]{08B05, 68Q17}
\keywords{subpower membership problem, Maltsev condition, cube term}
\thanks{This material is based upon work supported by the National Science Foundation under Grant No. DMS 1500254.}

\newcommand{\dproblem}[3]{
\begin{equation*}
\begin{tabular}{  l c  p{0.70\textwidth}  }
& \multicolumn{2}{l}{\textbf{\uppercase{#1}}} \\
& Instance: & #2  \\ 
& Question: & #3 \\
\end{tabular}
\end{equation*}
}

\newcommand{\B}{\mathbb{B}}
\newcommand{\A}{\mathbb{A}}
\newcommand{\T}{\mathfrak{T}}
\newcommand{\M}{\mathcal{M}}
\newcommand{\Id}{\mathbb{I}^\mathrm{d}}
\newcommand{\tod}{\to^\mathrm{d}}
\newcommand{\F}{\mathcal{F}}
\renewcommand{\H}{\mathcal{H}}
\newcommand{\V}{\mathcal{V}}
\newcommand{\W}{\mathcal{W}}

\newcommand{\oE}{\overline{\Sigma}}
\renewcommand{\phi}{\varphi}

\newcommand{\SMP}{\textnormal{SMP}}
\newcommand{\CD}{\textnormal{CD}}
\newcommand{\CP}{\textnormal{CP}}

\theoremstyle{plain}
\newtheorem{theorem}{Theorem}[section]
\newtheorem{corollary}[theorem]{Corollary}
\newtheorem{lemma}[theorem]{Lemma}
\newtheorem{claim}[theorem]{Claim}

\theoremstyle{definition}
\newtheorem{question}[theorem]{Question}
\newtheorem{ack}[theorem]{Acknowledgements}

\begin{document}

\begin{abstract}
The main result of this paper shows that if $\M$ is a consistent strong linear Maltsev condition which does not imply the existence of a cube term, then for any finite algebra $\A$ there exists a new finite algebra $\A_\M$ which satisfies the Maltsev condition $\M$, and whose subpower membership problem is at least as hard as the subpower membership problem for $\A$. We characterize consistent strong linear Maltsev conditions which do not imply the existence of a cube term, and show that there are finite algebras in varieties that are congruence distributive and congruence $k$-permutable ($k \geq 3$) whose subpower membership problem is EXPTIME-complete.
\end{abstract}

\maketitle

\section{Introduction}
We are interested in analyzing the time complexity of the subpower membership problem, which is a combinatorial decision problem asking if a given element of the direct power of an algebra is in the subalgebra generated by a given set of generators. More precisely, for a fixed finite algebra $\A$ in a language with finitely many operation symbols, the subpower membership problem for $\A$ is the decision problem
\dproblem{SMP($\A$)}{A positive integer $m$ and $m$-tuples $a_1, \dots, a_n,b \in \A^m$.}{Is $b$ in the subalgebra $\langle a_1, \dots, a_n \rangle$ of $\A^m$ generated by $a_1,\dots,a_n$?}
The time complexity of this problem is measured with respect to the input size $(n+1)m$. A naive algorithm, which computes the full subalgebra $\langle a_1 ,\dots, a_n\rangle$ and checks if $b$ is a member, shows the problem can be answered in at most exponential time. M. Kozik \cite{Koz} provided an example which shows this problem can be EXPTIME-complete. We know many cases in which this problem can be answered in polynomial time, such as finite groups (and multilinear expansions of groups) \cite{FHL,Wil}, finite algebras with a near-unanimity term \cite{BP}, expansions of finite nilpotent Maltsev algebras of prime power size \cite{May}, and finite algebras with a cube term from residually small varieties \cite{BMS}. There has also been much work in classifying this problem for finite semigroups \cite{BKMS,St1,St2}.  

However, there are still large unanswered questions regarding the time complexity of the subpower membership problem, even for finite algebras in congruence modular varieties. The time complexity remains unknown for the general case of finite Maltsev algebras, expansions of groups, and near-rings. 

In this paper, we provide hardness results for the subpower membership problem. A finite set of linear identities is called a strong linear Maltsev condition. Examples of strong linear Maltsev conditions are $k$-permutability for fixed $k \geq 1$ and the existence of an $m$-cube term for an algebra for fixed $m \geq 2$. Given a consistent strong linear Maltsev condition $\M$ which does not imply the existence of a cube term and given any finite algebra $\A$, we will construct a new finite algebra $\A_\M$ which satisfies the Maltsev condition $\M$, and whose subpower membership problem is at least as hard as the subpower membership problem for $\A$. We will characterize consistent strong linear Maltsev conditions which do not imply the existence of a cube term, and apply these results to Kozik's algebra \cite{Koz} which has EXPTIME-complete SMP to construct finite algebras in varieties that are congruence distributive and congruence $k$-permutable ($k \geq 3$) whose subpower membership problem is EXPTIME-complete.

\section{Preliminaries}
For a fixed algebraic language, a term in that language is called \textit{linear} if it contains at most one operation symbol, and an identity $s \approx t$ is called \textit{linear} if both $s$ and $t$ are linear terms. If $\Sigma \cup \{\phi\}$ is a set of linear identites, then $\phi$ is a \textit{consequence} of $\Sigma$, written $\Sigma \models \phi$, if every model of $\Sigma$ is a model of $\phi$. David Kelly's Completeness Theorem \cite{Kel,KKS} characterizes the $\models$ relation using a simple proof system for linear identities, which we will now describe.

If $\Sigma$ is a set of linear identities over the variable set $X$, the \textit{weak closure of $\Sigma$ in the variables $X$} is the smallest set $\oE$ of linear identities containing $\Sigma$ for which
	\begin{enumerate}
	\item $u \approx u \in \oE$ for all linear terms $u$ with variables from $X$.
	\item If $u \approx v \in \oE$, then $v \approx u \in \oE$.
	\item If $u \approx v \in \oE$ and $v \approx w \in \oE$, then $u \approx w \in \oE$.
	\item If $u \approx v \in \oE$ and $\gamma : X \to X$ is a function, then $u[\gamma] \approx v[\gamma] \in \oE$, where $u[\gamma]$ denotes the linear term obtained from $u$ by replacing each variable $x \in X$ with $\gamma(x) \in X$. 
	\end{enumerate}	
We write $\Sigma \vdash_{X} \phi$ if $\phi \in \oE$. Kelly's Completeness Theorem states that $\Sigma \models \phi$ if and only if $\Sigma \vdash_{X} \phi$ or $\Sigma \vdash_{X} x\approx y$ (for $x \neq y$), provided that $X$ is \textit{large enough} for $\Sigma \cup \{\phi\}$; that is, 
	\begin{itemize}
	\item $X$ contains at least $2$ variables,
	\item $|X| \geq \mathrm{arity}(f)$ for any operation symbol $f$ occurring in $\Sigma$, and
	\item $|X|$ is at least as large as the number of distinct variables occurring in any identity in $\Sigma \cup \{\phi\}$. 
	\end{itemize}

If $X$ and $Y$ are variable sets both large enough for $\Sigma \cup \{\phi\}$, then Kelly's Completeness Theorem implies that $\Sigma \vdash_{X} \phi$ if and only if $\Sigma \vdash_{Y} \phi$. Thus, if $\Sigma \vdash_{X} \phi$ for some $X$ which is large enough for $\Sigma \cup \{\phi\}$, we simply write $\Sigma \vdash \phi$ and say \textit{$\Sigma$ entails $\phi$}. Accordingly, we will refer to properties (1) through (4) above as \textit{entailment properties}. 

We say $\Sigma$ is \textit{inconsistent} if $\Sigma$ entails $x \approx y$ for distinct variables $x$ and $y$. Using Kelly's Completeness Theorem, we see $\Sigma$ is inconsistent if and only if the only models of $\Sigma$ are the trivial algebras. If $\Sigma$ is not inconsistent (or equivalently, has a non-trivial model), we say $\Sigma$ is \textit{consistent}. 

A \textit{strong Maltsev condition} is a pair $\M = (\H , \Sigma)$, where $\H$ is a finite set of operation symbols, and $\Sigma$ is a finite set of identites involving terms constructed from members of $\H$. A strong Maltsev condition is \textit{linear} if all of the identities in $\Sigma$ are linear, and a strong linear Maltsev condition is \textit{consistent} if $\Sigma$ is consistent. 

Finally, we define a particular strong linear Maltsev condition, $(\{c\}, \Sigma)$. The identities of $\Sigma$ are given by the rows of 
\[
c\left(
\begin{bmatrix}
\\
x_1\\
\\
\end{bmatrix},
\dots,
\begin{bmatrix}
\\
x_n\\
\\
\end{bmatrix}
\right)
\approx
\begin{bmatrix}
y\\
\vdots\\
y 
\end{bmatrix},
\]
where $x_1,\dots,x_n \in \{x,y\}^m \setminus (y,\dots,y)$ for some positive integer $m \geq 2$. We refer to $\Sigma$ as \textit{a set of cube identities for $c$}. If an algebra $\A$ has a term $c$ which satisfies a set of $m$ cube identities for $c$, then we say that $c$ is an \textit{$m$-cube term} for $\A$.

\section{The SMP and Strong Linear Maltsev Conditions}
We are now ready to state and prove the main theorem of this paper.

\begin{theorem}\label{cube}
Let $\M = (\H, \Sigma)$ be a consistent strong linear Maltsev condition such that $\Sigma$ does not entail cube identities for any $h \in \H$. 
	\begin{enumerate}
	\item[{\rm(i)}] For any finite algebra $\A$, there exists a finite algebra $\A_{\M}$ such that the language of $\A_\M$ contains $\H$, $\A_{\M} \models \Sigma$, and $\SMP(\A_{\M})$ is at least as hard as $\SMP(\A)$.
	\item[{\rm(ii)}] In particular, there exists a finite algebra $\B_\M$ such that the language of $\B_\M$ contains $\H$, $\B_\M \models \Sigma$, and $\SMP(\B_\M)$ is {\rm EXPTIME}-complete.
	\end{enumerate}
\end{theorem}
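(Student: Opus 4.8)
The plan is to establish (i) in full and then obtain (ii) by specializing $\A$ to Kozik's algebra and invoking the generic exponential-time upper bound. For (i), I would construct $\A_\M$ on a universe \emph{extending} that of $\A$, retaining copies of all of the operations of $\A$ so that $\A$ is a subalgebra of the reduct of $\A_\M$ to $\A$'s language; this guarantees that every subpower computation available in $\A$ remains available in $\A_\M$. The reduction witnessing ``$\SMP(\A_\M)$ at least as hard as $\SMP(\A)$'' then sends an instance $(a_1,\dots,a_n,b)$ over $\A^m$ to an instance over $\A_\M^{m'}$, adjoining only a bounded number of auxiliary coordinates and generators needed to make the $\A$-operations usable and to pin down the new operations; this map is clearly computable in polynomial time in $(n+1)m$.

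The heart of the argument is the definition of $h^{\A_\M}$ for $h \in \H$, and this is where I would spend the hypothesis that $\Sigma$ does not entail cube identities for any $h$. Combining this hypothesis with Kelly's Completeness Theorem, I would extract structural data witnessing the failure: for each $h$ of arity $n$, the set of argument tuples $\bar w \in \{x,y\}^n$ with $\Sigma \vdash h(\bar w) \approx y$ fails to cover all coordinates by its $x$-entries, so some coordinate of $h$ is never ``forced active.'' Packaged as a model, this yields a finite model $\mathbf D$ of $\Sigma$ whose operations are projection-like off a controlled ``junk'' part. I would then build $\A_\M$ by combining $\A$ with $\mathbf D$, letting each $h \in \H$ act through $\mathbf D$ (so that $\A_\M \models \Sigma$, a finite check reducing to the entailment properties (1)--(4)) while sending inputs that leave the $\A$-part to a distinguished junk value outside $A$.

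For correctness, the forward direction is routine: if $b \in \langle a_1,\dots,a_n\rangle_{\A^m}$, the same generating term uses only $\A$-operations, which survive in $\A_\M$, so the image of $b$ is generated in $\A_\M^{m'}$. The crux is the converse. I would track, along any generating term over $\A_\M$, where junk values appear coordinatewise, and argue from the design of the $\H$-operations that any nontrivial application of some $h$ either stays inside the clone of $\A$ (adding nothing new) or escapes irrecoverably into junk coordinates. Consequently any generated tuple lying entirely in $A^m$ already lies in $\langle a_1,\dots,a_n\rangle_{\A^m}$, giving the equivalence of instances.

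The main obstacle I anticipate is reconciling two opposing demands on the $\H$-operations: satisfying $\Sigma$ pushes them to be ``substantive,'' whereas the reduction needs them inert enough that they cannot assemble $b$. This tension is exactly what the no-cube-term hypothesis resolves, since a cube term is precisely what would let the new operations reassemble a target tuple from partial, junk-contaminated data across coordinates; forbidding cube identities is what makes the escape-to-junk behavior available and blocks the reassembly. Finally, for (ii), I would take $\A$ to be Kozik's algebra \cite{Koz}: by (i), $\SMP(\A_\M)$ is EXPTIME-hard, and it lies in EXPTIME by the naive subpower-enumeration bound valid for every finite algebra, hence is EXPTIME-complete; setting $\B_\M = \A_\M$ completes the proof.
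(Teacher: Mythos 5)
Your high-level architecture matches the paper's (a junk element absorbing under the $\F$-operations, projection-like interpretations of the symbols in $\H$, a trivial forward direction, and Kozik's algebra plus the naive EXPTIME upper bound for (ii)), but the two places where the real mathematical work happens are both gapped. First, the construction itself. You propose to spend the no-cube hypothesis by extracting, for each $h$, a coordinate that is never ``forced active'' and then ``packaging'' this as a finite model $\mathbf{D}$ of $\Sigma$. One free coordinate per operation symbol is not enough data to produce a model, because identities in $\Sigma$ link distinct symbols. Concretely, for the J\'onsson condition $\CD(3)$: the only free coordinate of $d_1$ is the first (since $\Sigma \vdash d_1(y,x,y) \approx y$ and $\Sigma \vdash d_1(y,y,x) \approx y$) and the only free coordinate of $d_2$ is the third, but interpreting $d_1$ as the first projection and $d_2$ as the third projection violates $d_1(x,y,y) \approx d_2(x,y,y)$. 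The construction that works (the paper's) interprets $h$ via the full entailment relation over arbitrary equality patterns: $h^{\A_\M}(\overline{a}) = a_i$ exactly when $\Sigma \vdash h(\overline{x}) \approx x_i$ for a variable tuple $\overline{x}$ with the same equality pattern as $\overline{a}$, and $0$ otherwise. Well-definedness uses only consistency, and $\A_\M \models \Sigma$ is proved by an evaluation lemma (\cref{eval}); the no-cube hypothesis is not used anywhere in this part, so your plan misplaces where that hypothesis is spent.

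Second, the converse of the reduction, which is the crux. Your dichotomy --- any application of $h$ ``either stays inside the clone of $\A$ \dots\ or escapes irrecoverably into junk'' --- is precisely what needs proof, and at the level of a single application it is false without further argument: $h^{\A_\M^m}$ can output a tuple with no junk coordinate that nevertheless mixes different arguments across coordinates (acting as a projection onto different indices $i$ in different coordinates $j$), and such a tuple need not be obtainable by $\F$-terms. This mixing case is exactly where the hypothesis must be spent, and your proposal only names the intuition (``blocks the reassembly'') without an argument. The paper's \cref{elim} supplies it: take a term with the minimum number of $\H$-labelled vertices, look at a topmost $\H$-vertex with value $z = h^{\A_\M^m}(c_1,\dots,c_k)$, and set $B_j = \{\, i : z|_j = c_i|_j \,\}$. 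If $\bigcap_{j=1}^m B_j \neq \emptyset$ one can delete this occurrence of $h$, contradicting minimality; if $\bigcap_{j=1}^m B_j = \emptyset$, collapsing each coordinate's witnessing variable tuple to $\{x,y\}$ (indices in $B_j$ to $y$, the rest to $x$) and applying entailment property (4) produces entailed identities $h(\cdots) \approx y$ whose matrix of left-hand sides has no all-$y$ column, i.e.\ cube identities for $h$ --- contradiction. Without this step, or an equivalent, your argument does not close. (A minor point: no auxiliary coordinates or generators are needed in the reduction; since $A \subseteq A \cup \{0\}$ and the $\F$-operations of $\A_\M$ restrict to those of $\A$, the identity map on instances is already a correct polynomial-time reduction.)
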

\begin{proof}
(i) Let $\A = \langle A; \F \rangle$ be a finite algebra. We assume the languages $\F$ and $\H$ are disjoint, and now define $\A_{\M}$. We define the universe of $\A_{\M}$ to be the set $A \cup \{0\}$, where the element $0$ is distinct from all elements of $A$. The language of $\A_{\M}$ will be $\F \cup \H$, which we must interpret in $\A_{\M}$. For $k$-ary $f \in \F$ and $(a_1,\dots, a_k) \in (A\cup \{0\})^k$, we define $f^{\A_{\M}}(a_1,\dots,a_k)= f^{\A}(a_1,\dots,a_k)$ if $a_i \neq 0$ for all $1 \leq i \leq k$, and $f^{\A_{\M}}(a_1,\dots,a_k) =0$ otherwise. Thus, $0$ is an absorbing element with respect to the operations $f^{\A_\M}$ for $f\in \F$.

To interpret the operation symbols of $\H$ in $\A_{\M}$, we need to introduce some terminology and notation. Let $X$ be a variable set which is large enough for $\Sigma$. Since $\M$ is consistent, we know $\Sigma$ does not entail $x \approx y$ for distinct $x,y \in X$. For any positive integer $k$, we will say that $(x_1,\dots,x_k) \in X^k$ and $(a_1,\dots,a_k) \in A^k$ have the \textit{same equality pattern} if, for all $1 \leq i,j \leq k$, the tuples have the property that $x_i = x_j$  if and only if $a_i = a_j$. For $\overline{a} \in A^k$, define 
\[
P_{\overline{a}} = \{\overline{x} \in X^k \mid \overline{x} \text{ and } \overline{a} \text{ have the same equality pattern} \}.
\]

We are now ready to describe the interpretation of the symbols in $\H$. For $k$-ary $h \in \H$ and $\overline{a} = (a_1,\dots,a_k) \in (A \cup \{0\})^k$, define
\[
h^{\A_\M}(a_1,\dots, a_k) = 
\begin{cases}
a_i &\text{if there exist } (x_1,\dots, x_k) \in P_{\overline{a}} \text{ and } 1\leq i \leq  k \\
    &\text{such that } \Sigma \vdash  h(x_1,\dots,x_k)  \approx x_i  \\
0 &\text{otherwise. }
\end{cases}
\]
We first show $h^{\A_\M}$ is well-defined for each $h \in \H$. We must show that if $(y_1, \dots, y_k), (z_1,\dots, z_k) \in P_{\overline{a}}$, and 
\[
\Sigma \vdash  h(y_1,\dots, y_k) \approx y_r \text{ and } \Sigma \vdash h(z_1,\dots,z_k) \approx z_q,  \text{ $1 \leq r,q \leq k$},
\] 
then $a_r = a_q$. To see this is the case, note that $(y_1, \dots, y_k), (z_1,\dots, z_k) \in P_{\overline{a}}$ implies that $y_i = y_j$ if and only if $z_i = z_j$ for all $1 \leq i,j \leq k$. Thus, the map $\gamma \colon \{y_1,\dots,y_k\} \to \{z_1,\dots,z_k\}$, $y_i \mapsto z_i$, is well-defined, and by entailment property (4) we have that $\Sigma \vdash h(z_1,\dots,z_k) \approx z_r$. Thus, by entailment properties (2) and (3), $\Sigma \vdash z_r \approx z_q$. Since $\Sigma$ is consistent it must be that $z_r = z_q$, hence $(z_1,\dots, z_k) \in P_{\overline{a}}$ implies that $a_r = a_q$. This completes the definition of $\A_{\M} = \langle A \cup \{0\} ; \F \cup \H \rangle$.

Next we show $\A_\M \models \Sigma$. In order to show this, we will first discuss how to evaluate, in $\A_\M$, an arbitrary linear term in the language $\H$. In the following, we use ``$=$" to denote equality of terms. Let $w(y_1,\dots,y_\ell)$ be any linear term with distinct variables $y_1,\dots, y_\ell$, where $\ell \leq |X|$ and $w$ need not depend on all variables. Since $w$ is a linear term, $w(y_1,\dots, y_\ell) = h(y_{t(1)},\dots, y_{t(k)})$ for some $k$-ary $h \in \H$ and some map $t \colon \{1,\dots, k\} \to \{1,\dots, \ell\}$.

\begin{claim}\label{eval}
Let $w(y_1,\dots,y_\ell) = h(y_{t(1)},\dots, y_{t(k)})$ be a linear term in the language $\H$ $(\ell \leq |X|)$. If $\overline{a} =(a_1,\dots,a_\ell)\in (A \cup \{0\})^\ell$, then 
\[
w^{\A_{\M}}(a_1,\dots, a_\ell) =
\begin{cases}
a_{j} &\text{if there exist } (x_1,\dots, x_\ell) \in P_{\overline{a}} \text{ and } 1\leq j \leq  \ell \\
    	&\text{such that } \Sigma \vdash w(x_1,\dots,x_\ell) \approx x_j   \\
0 &\text{otherwise. } 
\end{cases}
\]
\end{claim}
\begin{proof} 
\renewcommand{\qedsymbol}{$\blacksquare$}
Let $\overline{a} =(a_1,\dots,a_\ell)\in (A \cup \{0\})^\ell$, and let $\overline{a}_t = (a_{t(1)},\dots,a_{t(k)}) \in (A \cup \{0\})^k $. We first show that the following two conditions on $\overline{a}$ and $\overline{a}_t$ are equivalent:
	\begin{enumerate}
	\item[(a)] There exist $(x_1, \dots ,x_\ell) \in P_{\overline{a}}$ and $1 \leq j \leq \ell$ such that 
	\[
	\Sigma \vdash  w(x_1,\dots, x_\ell) \approx x_j.
	\]
	\item[(b)] There exist $(x_{t(1)}, \dots , x_{t(k)}) \in P_{\overline{a}_t}$ and $1 \leq i \leq k$ such that 
	\[
	\Sigma \vdash  h(x_{t(1)},\dots,x_{t(k)}) \approx x_{t(i)}.
	\]
	\end{enumerate}
(a) $\Rightarrow$ (b) If $(x_1, \dots ,x_\ell) \in P_{\overline{a}}$, then $(x_{t(1)}, \dots , x_{t(k)}) \in P_{\overline{a}_t}$. Further,
\begin{align*}
\Sigma \vdash x_j &\approx w(x_1,\dots,x_\ell) \text{ by assumption and entailment property (2)} \\
	 &=h(x_{t(1)}, \dots , x_{t(k)}).
\end{align*}
Thus, $\Sigma \vdash h(x_{t(1)}, \dots , x_{t(k)}) \approx x_j$ by entailment property (2). Since $\Sigma$ is consistent, $j = t(i)$ for some $1 \leq i \leq k$. 

\noindent (b) $\Rightarrow$ (a) Let $\equiv$ denote the equivalence relation on the set $\{1,\dots, \ell\}$ defined by $r \equiv q$ if and only if $a_r = a_q$, and denote the $\equiv$-class of $r \in \{1,\dots,\ell\}$ by $[r]$. Let $T$ be the set of $\equiv$-classes. Since $|T| \leq \ell \leq |X|$ and 
\[
t(q) \equiv t(r) \iff a_{t(q)} = a_{t(r)} \iff x_{t(q)} = x_{t(r)},
\] 
there is a well-defined and one-to-one map $\psi \colon T \to X$ such that $\psi([t(r)]) = x_{t(r)}$ for all numbers of the form $t(r)$ ($1 \leq r \leq k$). For each $s \in \{1,\dots, \ell\}$, define $z_s := \psi([s])$. Then $(z_1,\dots, z_\ell) \in P_{\overline{a}}$, and $x_{t(r)} = \psi([t(r)]) = z_{t(r)}$ for all $1 \leq r \leq k$. We compute
\begin{align*}
\Sigma \vdash x_{t(i)} &\approx  h(x_{t(1)}, \dots, x_{t(k)}) \text{ by assumption and entailment property (2)} \\
			&=h(z_{t(1)}, \dots, z_{t(k)}) \\
			&= w(z_1,\dots, z_\ell) .
\end{align*}
By entailment property (2), we have $\Sigma \vdash w(z_1,\dots, z_\ell) \approx x_{t(i)}$. Since $x_{t(i)} = z_{t(i)}$, $\Sigma \vdash w(z_1,\dots, z_\ell) \approx z_{t(i)}$, where $1 \leq t(i) \leq \ell$.  This completes the argument that (a) and (b) are equivalent. 

Now we compute
\begin{align*}
w^{\A_{\M}}(a_1,\dots, a_\ell) &= h^{\A_{\M}}(a_{t(1)},\dots, a_{t(k)}) \\
			&=\begin{cases}
			a_{t(i)} &\text{if } \exists (x_{t(1)},\dots, x_{t(k)}) \in P_{\overline{a}_t} \text{ and } 1\leq i \leq  k  \\
    				&\text{such that } \Sigma \vdash  h(x_{t(1)},\dots,x_{t(k)}) \approx x_{t(i)}   \\
			0 &\text{otherwise }
			\end{cases}\\
			&=\begin{cases}
			a_{j} &\text{if } \exists (x_1,\dots, x_\ell) \in P_{\overline{a}} \text{ and } 1\leq j \leq  \ell \\
    				&\text{such that } \Sigma \vdash w(x_1,\dots,x_\ell) \approx x_j   \\
			0 &\text{otherwise, } 
			\end{cases}
\end{align*}
where the last equality follows from the equivalence of (a) and (b). 
\end{proof}

We now finish the argument that $\A_\M \models  \Sigma$. Let $u \approx v \in \Sigma$, where $u$ and $v$ are linear terms in the language $\H$. Since $X$ is large enough for $\Sigma$, we may write $u(y_1,\dots,y_\ell) \approx v(y_1,\dots,y_\ell)$, where $y_1,\dots,y_\ell$ are distinct variables ($\ell \leq |X|$) and $u$ and $v$ need not depend on every variable.  If $\overline{a} =(a_1,\dots,a_\ell) \in (A\cup \{0\})^\ell$, entailment properties (2), (3), and (4) imply that if $(x_1,\dots,x_\ell) \in P_{\overline{a}}$ and $1 \leq j \leq \ell$, then $\Sigma \vdash u(x_1,\dots,x_\ell) \approx x_j$ if and only if $\Sigma \vdash v(x_1,\dots, x_\ell) \approx x_j$. Thus by \cref{eval}, $u^{\A_\M}(a_1,\dots, a_\ell) = v^{\A_\M}(a_1,\dots, a_\ell)$, so $\A_\M \models u \approx v$. 

To show that $\SMP(\A_{\M})$ is at least as hard as $\SMP(\A)$, we must reduce an instance of $\SMP(\A)$ to an instance of $\SMP(\A_{\M})$ in polynomial time such that the $\SMP(\A)$ instance has a `yes' answer if and only if the $\SMP(\A_{\M})$ instance has a `yes' answer. 

Fix an instance $a_1,\dots, a_n,b \in A^m$ of $\SMP(\A)$. This is also an instance of $\SMP(\A_{\M})$, and we will use this same instance in our reduction. Since we have not changed the instance, this reduction can be done in constant time.

The main goal now is to show that $b$ is in the subalgebra of $\A^m$ generated by $a_1,\dots, a_n$ if and only if $b$ is in the subalgebra of $\A_\M^m$ generated by $a_1,\dots, a_n$. To distinguish generated subalgebras of $\A^m$ and generated subalgebras of $\A_{\M}^m$, we will denote the subalgebra $\langle a_1,\dots, a_n\rangle$ of $\B \in \{\A^m,\A_{\M}^m\}$ by $\langle a_1,\dots, a_n\rangle_\B$. 

Suppose first that the $\SMP(\A)$ instance has a `yes' answer. That is, $b \in \langle a_1, \dots, a_n \rangle_{\A^m}$. Let $p(x_1,\dots, x_n)$ be a term in the language $\F$ such that $p^{\A^m}(a_1,\dots,a_n) =b$. Then $p(x_1,\dots,x_n)$ is also a term in the language $\F \cup \H$ and $p^{\A^m}(a_1,\dots,a_n) = p^{\A_\M^m}(a_1,\dots,a_n)$, so $b \in \langle a_1,\dots,a_n\rangle_{\A_\M^m}$. Thus the $\SMP(\A_{\M})$ instance also has a `yes' answer. 

For the converse direction, we will show that if $d_1,\dots, d_n \in (A \cup \{0\})^m$, $e \in A^m$, and $e \in \langle d_1, \dots, d_n \rangle_{\A_\M^m}$, then there exists a term $p(x_1,\dots,x_n)$ in the language $\F$ such that $p^{\A_\M^m}(d_1,\dots, d_n) =e$. Since $a_1,\dots, a_n, b \in A^m \subseteq (A \cup \{0\})^m$, this will show that $b \in  \langle a_1, \dots, a_n \rangle_{\A_\M^m}$ implies $b \in  \langle a_1, \dots, a_n \rangle_{\A^m}$. 

It will be useful to visualize a term by its term tree. We use the convention that the leaves are labeled by variables, and every node which is not a leaf is labeled by a single operation symbol. An example in the language $f$ (unary), $g$ (unary), and $t$ (binary) is given in \cref{fig1}.
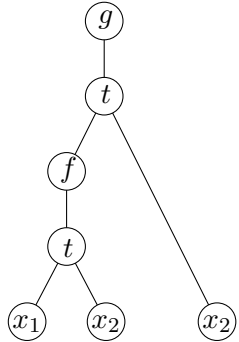
\begin{figure}[hbt]
	\begin{tikzpicture} 
        \draw[] (0,0) -- (0,-1);
        \draw[] (0,-1) -- (.525,-2);
        \draw[] (0,-1) -- (-.525,-2);
        \draw (0.5,1) -- (0,0);
        \draw (0.5,1) -- (2,-2);
        \draw (0.5,2) -- (0.5,1);
        \draw[fill=white] (0.5,2) circle (.25cm);
        \draw[fill=white] (0,0) circle (.25cm);
        \draw[fill=white] (0,-1) circle (.25cm);
        \draw[fill=white] (.525,-2) circle (.25cm);
        \draw[fill=white] (2,-2) circle (.25cm);
        \draw[fill=white] (-.525,-2) circle (.25cm);
        \draw[fill=white] (0.5,1) circle (.25cm);
        \node at (.025,.02) {$f$};
        \node at (.025,-1.02) {$t$};
        \node at (.525,-2.025) {$x_2$};
        \node at (-.525,-2.025) {$x_1$};
        \node at (2, -2.025) {$x_2$};
        \node at (0.525,1.025) {$t$};
        \node at (0.525, 2.025) {$g$};
	\end{tikzpicture}
\caption{The term tree $\T_r$ for the term $r(x_1,x_2) = g(t(f(t(x_1 , x_2)) , x_2))$.}\label{fig1}
\end{figure}
We refer to the vertex of maximum height in this tree as the \textit{root}, and denote the term tree of a term $r$ by $\T_r$. 

\begin{claim}\label{elim}
If $d_1,\dots, d_n \in (A \cup \{0\})^m$, $e \in A^m$, and $e \in \langle d_1, \dots, d_n \rangle_{\A_\M^m}$, then there exists a term $p(x_1,\dots,x_n)$ in the language $\F$ such that
\[
p^{\A_\M^m}(d_1,\dots,d_n) =e.
\]
\end{claim}
\begin{proof}
\renewcommand{\qedsymbol}{$\blacksquare$}
Let $d_1,\dots, d_n \in (A \cup \{0\})^m$ and $e \in A^m$. Let $p(x_1,\dots,x_n)$ be a term in the language $\F \cup \H$ such that $p^{\A_\M^m}(d_1,\dots,d_n)=e$. We assume $p(x_1,\dots, x_n)$ was chosen so that $\T_p$ has the minimum number of vertices with labels from $\H$ and satisfies $p^{\A_\M^m}(d_1,\dots, d_n)=e$.  If $\T_p$ has no label from $\H$, then the claim is proven. So we assume $\T_p$ has at least one vertex with label from $\H$. We will analyze the term $p(x_1,\dots, x_n)$ in parallel with the evaluation of $p$ at $d_1,\dots,d_n$, as illustrated in \cref{fig2}.

Choose a maximal vertex with respect to height with label from $\H$, and say the label is $h \in \H$. Call this vertex $\nu$. The subtree of $\T_p$ whose root is $\nu$ corresponds to a subterm $q$ of $p$. If $h$ is $k$-ary, the vertex $\nu$ has $k$ edges corresponding to $k$ subterms of $q$, which we will denote as $s_1,\dots, s_k$. For $1\leq i \leq k$, we define $c_i:=s^{\A_\M^m}_i(d_1,\dots,d_n)$. Set $z :=h^{\A_\M^m}(c_1,\dots, c_k)$. For $1 \leq j \leq m$, we write $z|_j$ to denote the $j^{th}$ coordinate of the $m$-tuple $z$. 
\begin{figure}[hbt]
	\begin{tikzpicture}
	\node at (0,2.5) {$\T_p$};
	\draw[] (0,2) circle (.2cm);
	\draw (-.15,1.9) -- (-2.5,-2.5);
	\draw (.15,1.9) -- (2.5,-2.5);

	\node at (0,.4) {$\T_q$};
	\draw (0,0) circle (.2cm);
	\node at (0,0) {$h$};
	\draw (0,-.2) -- (-.35,-.35);
	\draw (0,-.2) -- (.35,-.35);
	\draw[] (-.5,-.5) circle (.2cm);
	\draw[] (.5,-.5) circle (.2cm);
	\node at (-.7,-.1) {$\T_{s_1}$};
	\node at (.7,-.1) {$\T_{s_k}$};
	\draw[dotted] (-.3,-.5) -- (.3,-.5);
	\draw (-.5,-.7) -- (-1.5,-2.5);
	\draw (.5,-.7) -- (1.5,-2.5);
	\draw[dotted] (-.75,-1.5) -- (.75,-1.5);
	\draw (-.2,.1) -- (-.4,.5);
	\draw (-.4,.5) -- (0,1);
	\draw (0,1) -- (0,1.3);
	\draw (0,1.3) -- (-.15,1.5);
	\draw (-.15,1.5) -- (0,1.8);

	\node at (6,2.5) {$p^{\A_\M^m}(d_1,\dots,d_n)$};
	\node at (6,2) {$e$};
	\draw[] (6,2) circle (.2cm);
	\draw (5.85,1.9) -- (3.5,-2.5);
	\draw (6.15,1.9) -- (8.5,-2.5);

	\draw (6,0) circle (.2cm);
	\node at (6,0) {$z$};
	\draw (6,-.2) -- (5.65,-.35);
	\draw (6,-.2) -- (6.35,-.35);
	\draw[] (5.5,-.5) circle (.2cm);
	\draw[] (6.5,-.5) circle (.2cm);
	\node at (5.5,-.5) {$c_1$};
	\node at (6.5,-.5) {$c_k$};
	\draw[dotted] (5.7,-.5) -- (6.3,-.5);
	\draw (5.5,-.7) -- (4.5,-2.5);
	\draw (6.5,-.7) -- (7.5,-2.5);
	\draw[dotted] (5.25,-1.5) -- (6.75,-1.5);
	\draw (5.8,.1) -- (5.6,.5);
	\draw (5.6,.5) -- (6,1);
	\draw (6,1) -- (6,1.3);
	\draw (6,1.3) -- (5.85,1.5);
	\draw (5.85,1.5) -- (6,1.8);
	\end{tikzpicture}
\caption{The term tree $\T_p$ for the term $p(x_1,\dots,x_n)$ in the language $\F \cup \H$ (left), and the evaluation of $p$ at $(d_1,\dots, d_n)$ (right). }\label{fig2}
\end{figure}
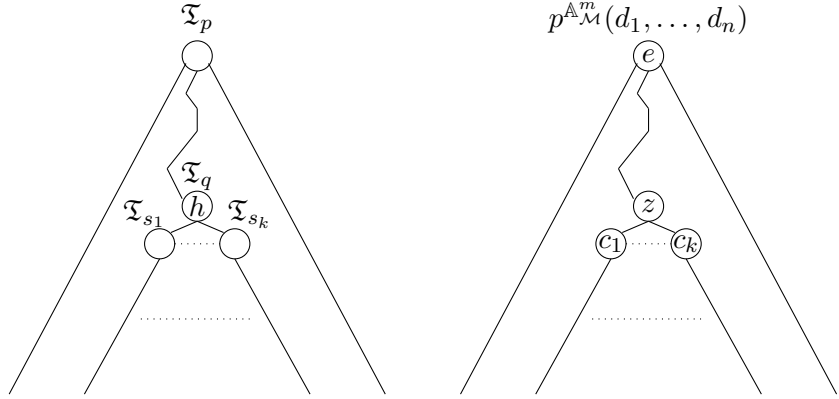
Since $\nu$ is a maximal vertex in $\T_p$ with label from $\H$, the term $p$ has the form $t(\dots,q(x_1,\dots,x_n),\dots)$, where $t$ is a term in the language $\F$. Since $0$ is an absorbing element with respect to the operations $f^{\A_\M}$ for $f \in \F$, and $e|_j \neq 0$ for all $1 \leq j \leq m$, we must have that 
\[
h^{\A_\M^m}(c_1,\dots, c_k)|_j =z|_j= q^{\A_\M^m}(d_1,\dots, d_n)|_j  \neq 0
\]
for all $1\leq j \leq m$. For $1 \leq j \leq m$, define
\[
B_j = \{i\in\{1,\dots, k\} \mid h^{\A_\M^m}(c_1,\dots, c_k)|_j = c_i|_j\}.
\]

If $\bigcap_{j=1}^m B_j \neq \emptyset$, we choose $i \in \bigcap_{j=1}^m B_j $ and form a new term $p'$ by replacing the subterm $q(x_1,\dots,x_n) = h(s_1(x_1,\dots,x_n),\dots, s_k(x_1,\dots,x_n))$ of $p$ with $s_i(x_1,\dots,x_n)$. Since $i \in \bigcap_{j=1}^m B_j $, we have $q^{\A_\M^m}(d_1,\dots,d_n) = h^{\A_\M^m}(c_1,\dots,c_k) = c_i$, so
\begin{align*}
p^{\A_\M^m}(d_1,\dots,d_n) &= t^{\A_\M^m}(\dots, q^{\A_\M^m}(d_1,\dots,d_n),\dots) \\
		&= t^{\A_\M^m}(\dots, c_i, \dots) \\
		&= t^{\A_\M^m}(\dots, s_i^{\A_\M^m}(d_1,\dots, d_n),\dots) \\
		&= p'^{\A_\M^m}(d_1,\dots,d_n).
\end{align*}
Thus, $p'^{\A_\M^m}(d_1,\dots, d_n) = e$. Further, $\T_{p'}$ has fewer vertices with labels from $\H$ than $\T_p$, which contradicts the choice of the term $p$.

Thus, it must be that $\bigcap_{j=1}^m B_j = \emptyset$. For each $1\leq j\leq m$, let $\overline{c}|_j =(c_1, \dots, c_k)|_j$. Since $h^{\A_\M^m}(c_1, \dots, c_k)|_j\neq 0$, there exist $(x_1^j,\dots,x_k^j) \in P_{\overline{c}|_j}$ and $1 \leq \ell \leq k$ such that $\Sigma \vdash h(x_1^j,\dots,x_k^j) \approx x_\ell^j$. In particular, $\ell \in B_j$. Define $\gamma_j \colon \{x_1^j,\dots,x_k^j\} \to \{x,y\}$, for distinct variables $x,y \in X$, by 
\[
\gamma_j(x_i^j) =
\begin{cases}
y &\text{ if } i \in B_j  \\
x &\text{ otherwise}.
\end{cases}
\]
This map is well-defined since $x_u^j = x_w^j$ if and only if $c_u|_j =c_w|_j$, which implies ($u\in B_j \iff w \in B_j)$. Then computing $h[\gamma_j]$ for all $1 \leq j \leq m$ and using entailment property (4), we have that 
\[
\Sigma \vdash 
h\left(
\begin{matrix}
\gamma_1(x_1^1) & \gamma_1(x_2^1) & \dots & \gamma_1(x_k^1) \\
\gamma_2(x_1^2) & \gamma_2(x_2^2) & \dots & \gamma_2(x_k^2) \\
	\vdots	&  \vdots			     & \vdots &	\vdots		\\
\gamma_m(x_1^m) & \gamma_m(x_2^m) & \dots & \gamma_m(x_k^m) \\
\end{matrix}
\right)
\approx
\begin{pmatrix}
y \\
y \\
\vdots \\
y
\end{pmatrix}.
\]
Since $\bigcap_{j=1}^m B_j = \emptyset$, no column in the above matrix on the left hand side is the tuple $(y, \dots, y)$. Thus, $\Sigma$ entails cube identities for $h$. This is also a contradiction, so we must have that $p$ is a term in the language $\F$. 
\end{proof}

Thus, if $b \in \langle a_1, \dots, a_n \rangle_{\A_\M^m}$, then there is a term $p(x_1,\dots,x_n)$ in the language $\F$ such that $p^{\A_\M^m}(a_1,\dots, a_n)=b$. Since $a_1,\dots,a_n \in A^m$, we have $p^{\A_\M^m}(a_1,\dots, a_n)=p^{\A^m}(a_1,\dots, a_n)$, so $b \in \langle a_1,\dots,a_n\rangle_{\A^m}$. We have thus shown that $\SMP(\A_{\M})$ is at least as hard as $\SMP(\A)$, which completes the proof of statement (i) of the theorem.

(ii) Let $\B$ be the finite algebra of Kozik \cite{Koz} for which the problem $\SMP(\B)$ is EXPTIME-complete. Then by statement (i) of the theorem, there exists a finite algebra $\B_\M$ such that $\B_\M \models \Sigma$ and $\SMP(\B_\M)$ is at least as hard as $\SMP(\B)$. Since the subpower membership problem can always be answered in EXPTIME, it follows that $\SMP(\B_\M)$ is EXPTIME-complete.
\end{proof}

\section{Applications}
We discuss some consequences of \cref{cube}. We will prove a characterization of consistent strong linear Maltsev conditions which do not imply the existence of a cube term, similar to the results of Opr\v{s}al \cite{Opr} and Moore and McKenzie \cite{MM}. We will use this characterization along with \cref{cube} to show there exist examples of finite algebras in varieties that are congruence distributive and congruence $k$-permutable ($k \geq 3$) whose SMP is EXPTIME-complete. Before stating and proving the corollaries, we must first introduce some definitions and notation.

Let $\V$ and $\W$ be two varieties, and let $\{f_i\}_{i \in I}$ be the languange of $\V$. We say that \textit{$\V$ is interpretable in $\W$} if for every operation symbol $f_i$, there is a term $t_i$ (of the same arity) in the language of $\W$ such that for all $\A \in \W$, the algebra $\langle A; \{t_i^{\A}\}_{i \in I} \rangle$ is a member of $\V$. If $\V$ is interpretable in $\W$, we write $\V \leq \W$. For a strong Maltsev condition $\M = (\H,\Sigma)$, we denote the variety determined by $\Sigma$ by $\V_\M$. 

The \textit{dual algebra} of the $2$-element implication algebra $\mathbb{I} = \langle \{0,1\}; \to \rangle$ is the algebra $\mathbb{I}^\mathrm{d} = \langle \{0,1\}; \tod \rangle$, where the operation $\tod$ is binary and is obtained from the operation table of $\to$ by permuting $0$ and $1$. 
\begin{table}[hbt]\label{tod}
	\begin{tabular}{l | c  r}
	$\tod$ & 0 & 1 \\ \hline
	0 & 0 & 1 \\
	1 & 0 & 0 
	\end{tabular}
\caption{The operation table for $\tod$.}
\end{table}
\begin{corollary}\label{int}
If $\M =(\H,\Sigma)$ is a strong linear Maltsev condition, then the following are equivalent:
	\begin{enumerate}
	\item[{\rm (i)}] $\M$ is consistent and $\Sigma$ does not entail the existence of cube identities for any $h \in \H$.
	\item[{\rm (ii)}] $\V_\M \leq \V(\Id)$.
	\end{enumerate}
\end{corollary}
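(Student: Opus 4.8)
The plan is to prove the two implications separately, deducing (i)$\Rightarrow$(ii) from \cref{cube} and establishing (ii)$\Rightarrow$(i) directly; the common engine is an explicit description of the clone of $\Id$. Writing $\tod(x,y) = \lnot x \wedge y$, I would first record the identities $\tod(x,x) = 0$, $\tod(\tod(x,x),y) = y$, $\tod(\tod(x,y),y) = x\wedge y$, and $\tod(g,x_1) = x_1\wedge\lnot g$, together with the disjunction identity $\tod(\tod(g,\tod(g',x_1)),x_1) = g\vee g'$, valid whenever $g,g' \le x_1$ (that is, $g,g'$ take the value $1$ only where $x_1 = 1$). From these I would show $\mathrm{Clo}(\Id) = \{f : f \le x_i \text{ for some coordinate } i\}$. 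The inclusion $\subseteq$ is immediate since $\tod(x,y) \le y$ and being below a coordinate is preserved under composition. For $\supseteq$ it suffices, after permuting coordinates, to treat $f \le x_1$; letting $\Phi = \{\phi : x_1\wedge\phi \in \mathrm{Clo}(\Id)\}$, one checks $\Phi$ contains the constant $1$ and every variable $x_j$ (as $x_1\wedge x_j = \tod(\tod(x_1,x_j),x_j)$), is closed under complementation (as $x_1\wedge\lnot\phi = \tod(x_1\wedge\phi,\,x_1)$) and under $\vee$ (by the disjunction identity), hence $\Phi$ is all Boolean functions; since $f\le x_1$ has the form $x_1\wedge\phi$, this gives $f\in\mathrm{Clo}(\Id)$.

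For (ii)$\Rightarrow$(i): interpretability $\V_\M \le \V(\Id)$ yields, for each $h\in\H$, a $\tod$-term $t_h$ of equal arity with $\langle\{0,1\};\{t_h^{\Id}\}\rangle \models \Sigma$; being a two-element algebra, this gives $\Sigma$ a nontrivial model, so $\Sigma$ is consistent. For the cube-identity half I would first show that $\Id$ has no cube term. For each $m\ge 2$ the set $R_m = \{0,1\}^m\setminus\{(1,\dots,1)\}$ is closed under the coordinatewise action of $\tod$, since $\vec p \tod \vec q = (1,\dots,1)$ forces $\vec q = (1,\dots,1)$; thus $R_m$ is a subuniverse of $\Id^m$ and is preserved by every term operation of $\Id$. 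But under $x\mapsto 0$, $y\mapsto 1$ the columns of a set of $m$-cube identities all lie in $R_m$, while any cube term must send them to $(1,\dots,1)\notin R_m$; hence no term operation of $\Id$ is a cube term. Consequently, if $\Sigma$ entailed cube identities for some $h\in\H$, then $t_h^{\Id}$ would be a cube term of $\Id$, a contradiction.

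For (i)$\Rightarrow$(ii): I would apply \cref{cube} to a one-element algebra $\A$, obtaining $\A_\M$ on a two-element set, let $\mathbb{D}$ be its $\H$-reduct (so $\mathbb{D}\models\Sigma$ by \cref{cube}), and identify its universe with $\{0,1\}$. The no-cube-identities hypothesis forces each $h^{\mathbb{D}}$ below a coordinate: if $h^{\mathbb{D}}(\overline a^{(1)}) = \dots = h^{\mathbb{D}}(\overline a^{(s)}) = 1$ while the supports of their $1$'s had empty intersection, the witnessing derivations $\Sigma \vdash h(\overline x^{(r)}) \approx x^{(r)}_{j_r}$ (with $j_r$ in the support of $\overline a^{(r)}$) could be merged---via the substitutions $\gamma_r$ sending the support variable to $y$ and all others to $x$, exactly as at the end of the proof of \cref{elim}---into a set of $s$ cube identities for $h$ having no all-$y$ column, which is impossible. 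Taking the family of all $1$-inputs shows $h^{\mathbb{D}}\le x_i$ for some $i$, so $h^{\mathbb{D}}\in\mathrm{Clo}(\Id)$ by the first paragraph; writing $t_h$ for the corresponding $\tod$-term, we have $\mathbb{D} = \langle\{0,1\};\{t_h^{\Id}\}\rangle \models \Sigma$. Hence $\Sigma$, with each $h$ replaced by $t_h$, holds in $\Id$ and therefore in $\V(\Id) = \mathrm{HSP}(\Id)$, which is exactly the assertion $\V_\M \le \V(\Id)$.

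The step I expect to be most delicate is the clone identification $\mathrm{Clo}(\Id) = \{f : f\le x_i \text{ for some } i\}$, as it is what links the purely syntactic output of the construction to genuine term operations of $\Id$; the disjunction identity for operations below a common coordinate is the point that makes it go through, and one could alternatively recognize this clone as the Post class $T_1^\infty$ and quote Post's lattice. The remaining ingredients---consistency, the absence of a cube term in $\Id$, and the transfer of the cube-identity argument of \cref{elim} to the two-element reduct $\mathbb{D}$---should be routine given the machinery already in place.
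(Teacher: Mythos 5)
Your proposal is correct, and it follows the paper's overall skeleton: for (i)$\Rightarrow$(ii) you apply \cref{cube} to a one-element algebra and realize the $\H$-operations of the resulting two-element algebra as term operations of $\Id$, and for (ii)$\Rightarrow$(i) you use the interpreting terms $t_h$ to get a two-element model of $\Sigma$ and rule out cube identities. The difference lies in how the intermediate facts are established. The paper gets ``the operations of $\A_\M$ lie in $\mathrm{Clo}(\Id)$'' in two cited steps: \cref{elim} (with $\F = \emptyset$, so terms are variables) shows the operations preserve every relation $R_m = \{0,1\}^m \setminus \{(1,\dots,1)\}$, and then Post's classification is quoted for $\mathrm{Pol}(\R) = \mathrm{Clo}(\Id)$. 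You instead (a) show directly that the no-cube-entailment hypothesis forces each $h^{\A_\M}$ below a coordinate, by replaying the column-merging argument from the end of the proof of \cref{elim} on single applications of $h$ (your observation that an empty intersection of supports forces at least two $1$-inputs, hence a legitimate set of $m \geq 2$ cube identities, is the point that makes this work), and (b) prove by hand, via the $\Phi$-closure argument with your disjunction identity, that every Boolean function below a coordinate is a $\tod$-term --- this is exactly the content of the Post-lattice fact the paper cites. You also supply a proof (via $R_m$ being a subuniverse of $\Id^m$) of the assertion, stated without proof in the paper, that $\Id$ has no cube term. The paper's route is shorter because it leverages \cref{elim} and standard references; yours is self-contained and makes the two quoted facts explicit, at the cost of partially re-deriving a special case of \cref{elim} that is already available.
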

\begin{proof}
(i) $\Rightarrow$ (ii) Let $\A = \langle \{1\}; \emptyset \rangle$ be a $1$-element algebra whose language is the empty set. Let $\A_\M = \langle \{0,1\}; \H \rangle$ be the constructed algebra of \cref{cube}. For any positive integer $m$ and tuples $a_1,\dots, a_n \in \{0,1\}^m$, by \cref{elim} we see that if $(1,\dots, 1) \in \langle a_1,\dots, a_n \rangle_{\A_\M^m}$, then there is a term $p$ in the language $\emptyset$ such that $p^{\A_\M^m}(a_1, \dots a_n) = (1,\dots,1)$; that is, $a_i = (1,\dots, 1)$ for some $1 \leq i \leq n$. Thus, the operations of $\A_\M$ preserve the relation $R_m = \{0,1\}^m \setminus (1,\dots,1)$ for all $m \geq 1$. 

Let $\mathbf{R}$ be the relational structure $\langle \{0,1\}; \{R_m\}_{m\geq 1} \rangle$, and let $\mathrm{Pol}(\mathbf{R})$ denote the set of all operations which preserve the relations of $\mathbf{R}$. Let $\mathrm{Clo}(\Id)$ denote the clone of term operations of $\Id$. Since the operations of $\A_\M$ are a subset of $\mathrm{Pol}(\mathbf{R})$ and $\mathrm{Pol}(\mathbf{R}) = \mathrm{Clo}(\Id)$ \cite{Post}, for every $h \in \H$ of arity $k$, there is a term $t_h$ of arity $k$ in the language of $\Id$ such that $h^{\A_\M}(c_1,\dots,c_k) = t_h^{\Id}(c_1,\dots,c_k)$ for all $(c_1,\dots, c_k) \in \{0,1\}^k$. Thus, $\langle \{0,1\}; \{t_h^{\Id}\}_{h \in \H}\rangle \models \Sigma$. If $\B  \in \V(\Id)$, then $\B$ satisfies all identities satisfied by $\Id$, so $\langle B; \{t_h^{\B}\}_{h \in \H} \rangle \models \Sigma$. This implies that $\langle B; \{t_h^{\B}\}_{h \in \H} \rangle  \in \V_\M$. Hence, $\V_\M \leq \V(\Id)$.

(ii) $\Rightarrow$ (i) If $\V_\M \leq \V(\Id)$, then for each operation symbol $h \in \H$ there is a term $t_h$ in the language of $\Id$ such that the algebra $\A = \langle \{0,1\}; \{t_h^{\Id} \}_{h \in \H} \rangle$ is a member of $\V_\M$. Thus $\M$ is consistent. Since $\Id$ does not have a cube term, $\A$ cannot have a cube term. Thus, $\Sigma$ does not entail the existence of cube identities for any $h \in \H$.
\end{proof}

We may quasi-order strong linear Maltsev conditions by interpretability. That is, we say $\M_1 \leq \M_2$ if and only if $\V_{\M_1} \leq \V_{\M_2}$. By identifying varieties which interpret into each other, this becomes a partial order. If $\M_1 \leq \M_2$, we say $\M_2$ is \textit{stronger} than $\M_1$. 

Given a finite index set $J$ and finitely many strong linear Maltsev conditions indexed by $J$, $\M_j =(\H_j,\Sigma_j)$, we may form a new strong linear Maltsev condition $\M = (\bigcup_{j\in J} \H_j, \bigcup_{j\in J}\Sigma_j)$. 

\begin{lemma}\label{str}
If $\H_i \cap \H_j = \emptyset$ for all $i \neq j$, then the following are equivalent:
	\begin{enumerate}
	\item[{\rm (i)}] For all $j \in J$, $\M_j$ is consistent and $\Sigma_j$ does not entail the existence of cube identities for any $h \in \H_j$.
	\item[{\rm (ii)}] For all $j \in J$, $\V_{\M_j} \leq \V(\Id)$.
	\item[{\rm (iii)}] $\V_{\M} \leq \V(\Id)$.
	\item[{\rm (iv)}] $\M$ is consistent and $\bigcup_{j \in J}\Sigma_j$ does not entail the existence of cube identities for any $h \in\bigcup_{j\in J} \H_j$.
	\end{enumerate}
\end{lemma}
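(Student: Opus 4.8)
The plan is to reduce everything to \cref{int}, which already packages consistency together with the absence of cube identities into a single clean interpretability statement. First I would observe that since $J$ is finite and each $\Sigma_j$ is a finite set of linear identities, $\Sigma = \bigcup_{j\in J}\Sigma_j$ is again a finite set of linear identities in the language $\bigcup_{j\in J}\H_j$, so $\M$ is itself a strong linear Maltsev condition and \cref{int} applies to it. Applying \cref{int} to each $\M_j$ yields the equivalence of (i) and (ii), and applying \cref{int} to $\M$ yields the equivalence of (iii) and (iv). This leaves the genuine content of the lemma: the equivalence of (ii) and (iii).

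For $(iii) \Rightarrow (ii)$ I would argue by restriction. Assume $\V_{\M} \leq \V(\Id)$, so for each $h \in \bigcup_{j}\H_j$ there is a term $t_h$ in the language of $\Id$ such that $\langle A; \{t_h^{\A}\}\rangle \models \Sigma$ for every $\A \in \V(\Id)$. Fix $j$ and keep only the terms $t_h$ for $h \in \H_j$. Because every identity of $\Sigma_j$ uses operation symbols from $\H_j$ only, and $\Sigma_j \subseteq \Sigma$, the algebra $\langle A; \{t_h^{\A}\}_{h \in \H_j}\rangle$ still satisfies $\Sigma_j$, so it lies in $\V_{\M_j}$; hence $\V_{\M_j} \leq \V(\Id)$.

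The direction $(ii) \Rightarrow (iii)$ is where the disjointness hypothesis does its work, and I expect it to be the main (though still essentially routine) obstacle. Assuming $\V_{\M_j} \leq \V(\Id)$ for each $j$, each component supplies, for every $h \in \H_j$, a term $t_h$ in the language of $\Id$ witnessing its interpretation. Since the $\H_j$ are pairwise disjoint, each symbol $h \in \bigcup_j \H_j$ lies in exactly one $\H_j$, so these assignments glue together without conflict into a single map $h \mapsto t_h$ on all of $\bigcup_j \H_j$. For any $\A \in \V(\Id)$ and any identity $u \approx v \in \Sigma_j$, the identity involves only symbols from $\H_j$, whose interpretations agree with those of the $\M_j$-witness; hence $\langle A; \{t_h^{\A}\}_{h \in \H_j}\rangle \models u \approx v$, and therefore the full algebra $\langle A; \{t_h^{\A}\}_{h \in \bigcup_j \H_j}\rangle$ satisfies every identity of $\Sigma = \bigcup_j \Sigma_j$. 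This gives $\langle A; \{t_h^{\A}\}\rangle \in \V_{\M}$ for all $\A \in \V(\Id)$, i.e.\ $\V_{\M} \leq \V(\Id)$. The essential point is precisely that disjointness of the $\H_j$ prevents the separate witnesses from interpreting a shared symbol in two incompatible ways; without it the gluing could fail, which is exactly why the hypothesis is imposed.
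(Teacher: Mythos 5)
Your proposal is correct and follows essentially the same route as the paper: both reduce (i)$\iff$(ii) and (iii)$\iff$(iv) to \cref{int}, and then prove (ii)$\iff$(iii) by restricting/gluing the interpretation maps, with disjointness of the $\H_j$ guaranteeing the glued map is well-defined. The only cosmetic difference is that you split (ii)$\iff$(iii) into two explicit directions, whereas the paper presents the bijection between interpretation maps and then verifies both satisfaction claims at once.
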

\begin{proof}
The equivalences (i) $\iff$ (ii) and (iii) $\iff$ (iv) follow from \cref{int}. We now show (ii) $\iff$ (iii).

If we assume, for all $j \in J$, there is a map from $\H_j$ to terms in the language of $\V(\Id)$, then we have an induced map from $\bigcup_{j \in J} \H_j$ to terms in the language of $\V(\Id)$. The induced map is well-defined since $\H_i \cap \H_j = \emptyset$ for all $i \neq j$. If we assume there is a map from $\bigcup_{j \in J} \H_j$ to terms in the language of $\V(\Id)$, then for all $j \in J$ we have an induced map from $\H_j$ to terms in the language of $\V(\Id)$ by restriction.

Let $\A \in \V(\Id)$. The algebra $\langle A; \{t_{h}^\A \}_{h \in \H_j} \rangle$ satisfies the identities in $\Sigma_j$ for all $j \in J$  if and only if the algebra $\langle A; \{t_h^\A\}_{h \in \bigcup_{j\in J} \H_j} \rangle$ satisfies the identities in $\bigcup_{j \in J} \Sigma_j$. Thus 
\[
\langle A; \{t_h^\A\}_{h \in \H_j} \rangle \in \V_{\M_j} \text{ for all $j \in J$} \iff \langle A; \{t_h^\A\}_{h \in \bigcup_{j\in J} \H_j} \rangle \in \V_\M,
\]
which shows $\V_{\M_j} \leq \V(\Id)$ for all $j \in J$ if and only if $\V_\M \leq \V(\Id)$. 
\end{proof}

Thus from finitely many strong linear Maltsev conditions for which \cref{cube} applies, we may produce a stronger strong linear Maltsev condition for which \cref{cube} applies. We will use this strategy to obtain examples of finite algebras in varieties that are congruence distributive and congruence $k$-permutable ($k \geq 3$) whose subpower membership problem is EXPTIME-complete.

B. J\'{o}nsson \cite{Jon} characterized algebras in congruence distributive varieties by the existence of an integer $k \geq 1$ and ternary terms $d_0, \dots, d_k$ which satisfy the following set of identities:
	\begin{align*}
	d_0(x,y,z) &\approx x, \\
	d_k(x,y,z) &\approx z ,\\
	d_i(x,y,x) &\approx x \text{ for all } 0 \leq i \leq k ,\\
	d_i(x,x,y) &\approx d_{i+1}(x,x,y) \text{ for all even } i , \text{ and }\\
	d_i(x,y,y) &\approx d_{i+1}(x,y,y) \text{ for all odd } i  .
	\end{align*} 
The terms $d_0, \dots, d_k$ are referred to as \textit{J\'{o}nsson terms}, and $\CD(k)$ is often used to refer to the class of algebras which have J\'{o}nsson terms $d_0, \dots, d_k$. 

J. Hagemann and A. Mitschke \cite{HM} characterized algebras in congruence $k$-permutable varieties by the existence of ternary terms $p_0, \dots, p_k$ which satisfy the following set of identities:
	\begin{align*}
	p_0(x,y,z) &\approx x, \\
	p_k(x,y,z) &\approx z, \text{ and } \\
	p_i(x,x,y ) &\approx p_{i+1}(x,y,y) \text{ for all } i .
	\end{align*}
The terms $p_0, \dots, p_k$ are referred to as \textit{Hagemann--Mitschke terms}, and $\CP(k)$ is often used to refer to the class of algebras which have Hagemann--Mitschke terms $p_0, \dots, p_k$. 

We note that the sequence of the conditions $\CD(k)$ (respectively, $\CP(k)$) is a weakening sequence; that is, if $\A$ is a member of $\CD(k)$ (respectively, $\CP(k)$), $\A$ is also a member of $\CD(\ell)$ (respectively, $\CP(\ell)$) for all $\ell > k$. 

An algebra is in $\CD(1)$ if and only if it is trivial, and is in $\CD(2)$ if and only if it has a majority term operation. If an algebra is in $\CP(2)$ and is also in a congruence distributive variety, then the algebra has a majority term operation \cite{Pix}. Thus, every finite algebra which satisfies one of these properties has a subpower membershp problem in P.

\begin{corollary}\label{cdcp}
If $k \geq 3$ and $\ell \geq 3$, then there exists a finite algebra $\A \in \CD(k) \cap \CP(\ell)$ such that $\SMP(\A)$ is {\rm EXPTIME}-complete.
\end{corollary}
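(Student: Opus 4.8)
The plan is to realize both the congruence distributivity and the $k$-permutability conditions inside the variety generated by $\Id$, and then transport Kozik's hardness through \cref{cube}. Fix $k,\ell\ge 3$. The J\'onsson identities defining $\CD(k)$ and the Hagemann--Mitschke identities defining $\CP(\ell)$ are all linear, so $\CD(k)=(\H_d,\Sigma_d)$ and $\CP(\ell)=(\H_p,\Sigma_p)$ are strong linear Maltsev conditions, and I may assume their operation symbol sets $\H_d=\{d_0,\dots,d_k\}$ and $\H_p=\{p_0,\dots,p_\ell\}$ are disjoint. The goal is to verify hypothesis (ii) of \cref{str} for the pair $\CD(k),\CP(\ell)$, conclude that the combined condition is consistent and entails no cube identities, and then invoke \cref{cube}.

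The core step is to show $\Id\in\CD(3)$ and $\Id\in\CP(3)$; by the weakening property this upgrades to $\Id\in\CD(k)\cap\CP(\ell)$, which gives $\V_{\CD(k)}\le\V(\Id)$ and $\V_{\CP(\ell)}\le\V(\Id)$. The observation that makes this feasible, despite the clone of $\Id$ being small, is that a direct check on the table for $\tod$ shows $\Id$ has a meet operation $x\wedge y=(x\tod y)\tod y$ together with the two relative complements $\bar x\wedge y=x\tod y$ and $x\wedge\bar y=y\tod x$. Using these I would write down explicit ternary operations and verify the defining identities by a short finite computation: for $\CD(3)$ take $d_0=x$, $d_3=z$, and $d_1(x,y,z)=x\wedge(y\vee z)$, $d_2(x,y,z)=z\wedge(x\vee\bar y)=(x\tod y)\tod z$; for $\CP(3)$ take $p_0=x$, $p_3=z$, and $p_1(x,y,z)=x\wedge(y\leftrightarrow z)$, $p_2(x,y,z)=z\wedge(x\leftrightarrow y)$. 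That these operations actually belong to the clone of $\Id$ can be confirmed either by exhibiting the $\tod$-terms directly or, more slickly, using $\mathrm{Pol}(\R)=\mathrm{Clo}(\Id)$ together with the resulting criterion that a Boolean operation lies in the clone exactly when it is dominated pointwise by one of its projections; each operation above is dominated by $\pi_1$ or $\pi_3$.

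With $\V_{\CD(k)}\le\V(\Id)$ and $\V_{\CP(\ell)}\le\V(\Id)$ in hand, \cref{str} (applied to the disjoint families $\H_d,\H_p$) shows that the combined strong linear Maltsev condition $\M=(\H_d\cup\H_p,\Sigma_d\cup\Sigma_p)$ is consistent and entails cube identities for no $h\in\H_d\cup\H_p$. Thus \cref{cube} applies to $\M$, and \cref{cube}(ii) furnishes a finite algebra $\B_\M$ with $\B_\M\models\Sigma_d\cup\Sigma_p$ whose subpower membership problem is EXPTIME-complete. Since $\B_\M$ carries operations $d_0,\dots,d_k$ and $p_0,\dots,p_\ell$ satisfying, respectively, the J\'onsson and Hagemann--Mitschke identities, it lies in $\CD(k)\cap\CP(\ell)$, so $\A:=\B_\M$ is the desired algebra.

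I expect the main obstacle to be the core step: understanding the clone of $\Id$ well enough to produce J\'onsson and Hagemann--Mitschke terms of precisely the permitted lengths. The length bound is sharp and this is where the hypothesis $k,\ell\ge 3$ enters. Indeed, membership in $\CD(2)$ forces a majority term, and within a congruence distributive variety so does $\CP(2)$, whereas $\Id$ has no majority term: the three tuples $(0,1,1),(1,0,1),(1,1,0)$ of $R_3$ have coordinatewise median $(1,1,1)\notin R_3$, so the median is not a term operation of $\Id$. Hence the construction cannot land below $k=\ell=3$, and everything following the term construction is a routine assembly of \cref{int}, \cref{str}, and \cref{cube}.
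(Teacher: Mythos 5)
Your proposal is correct and follows essentially the same route as the paper: exhibit J\'onsson and Hagemann--Mitschke terms for $\Id$ witnessing $\V_{\CD(3)}\leq\V(\Id)$ and $\V_{\CP(3)}\leq\V(\Id)$ (your $d_1,d_2$ are exactly the Boolean functions behind the paper's terms, and your $p_1,p_2$ are valid alternatives to the paper's $(z\tod y)\tod x$ and $(x\tod y)\tod z$), then combine the two conditions via \cref{str} and conclude with \cref{cube}(ii). The remaining differences are cosmetic: you certify clone membership through the projection-domination description of $\mathrm{Clo}(\Id)$ (which does follow from $\mathrm{Pol}(\R)=\mathrm{Clo}(\Id)$) rather than by writing explicit $\tod$-terms, and you apply the weakening $\CD(3)\subseteq\CD(k)$, $\CP(3)\subseteq\CP(\ell)$ before invoking the machinery rather than after.
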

\begin{proof}
Let $\M_1 = (\H_1,\Sigma_1)$ be the strong linear Maltsev condition for $\CD(3)$. Note that the boolean operation $\wedge$ is a term operation of $\Id$ given by $x \wedge y = (x \tod y) \tod y$. It is straightforward to check that the terms
	\begin{align*}
	d_1(x,y,z) &= ((y \tod x) \wedge (z \tod x)) \tod x, \\
	d_2(x,y,z) &= (x \tod y) \tod z,
	\end{align*}
and the projections $d_0$ and $d_3$ satisfy the identities of $\CD(3)$, and so $\V_{\M_1} \leq \V(\Id)$. By \cref{int}, $\M_1$ is consistent and $\Sigma_1$ does not entail the existence of cube identities for any $h \in \H_1$. 

Let $\M_2 = (\H_2, \Sigma_2)$ be the strong linear Maltsev condition for $\CP(3)$. It is straightforward to check that the terms
	\begin{align*}
	p_1(x,y,z) &= (z \tod y) \tod x, \\
	p_2(x,y,z) &= (x \tod y) \tod z,
	\end{align*}
and the projections $p_0$ and $p_3$ satisfy the identities of $\CP(3)$, and so $\V_{\M_2} \leq \V(\Id)$. By \cref{int}, $\M_2$ is consistent and $\Sigma_2$ does not entail the existence of cube identities for any $h \in \H_2$. 

By \cref{str}, $\M = (\H_1 \cup \H_2, \Sigma_1 \cup \Sigma_2)$ is consistent and $\Sigma_1 \cup \Sigma_2$ does not entail the existence of cube identities for any $h \in \H_1 \cup \H_2$. Then by \cref{cube}(ii), there exists $\A_\M \in \CD(3) \cap \CP(3)$ (thus in $\CD(k) \cap \CP(\ell)$ for $k,\ell \geq 3$) such that $\SMP(\A_\M)$ is EXPTIME-complete.
\end{proof}

There exist examples of finite semigroups whose SMP is $\mathrm{NP}$-complete and examples of finite semigroups whose SMP is $\mathrm{PSPACE}$-complete \cite{BKMS,St1,St2}. We know from \cref{cube}(i) that if we expand these semigroups to algebras that belong to congruence distributive or congruence $k$-permutable ($k \geq 3$) varieties, the subpower membership problem for the expanded algebra is at least as hard as the subpower membership problem for the original algebra. The upper bound for the complexity of these problems remains unknown. 

\begin{question}
Are there examples of algebras in congruence distributive varieties or congruence $k$-permutable varieties whose SMP is $\mathrm{NP}$-complete or $\mathrm{PSPACE}$-complete?
\end{question}

\begin{ack}
I would like to thank \'{A}gnes Szendrei for many helpful suggestions and enlightening discussions. I would also like to thank Peter Mayr for his helpful suggestions. 
\end{ack}


\end{document}